\newcommand{\depth}{\mbox{depth}\,}
\renewcommand{\dim}{\mbox{dim}\,}
\newtheorem{thm}{Theorem}[section]
\newtheorem{conj}[thm]{Conjecture}
\newtheorem{cor}[thm]{Corollary}
\newtheorem{prop}[thm]{Proposition}
\newtheorem{exam}[thm]{Example}
\newtheorem{rem}[thm]{Remark}
\numberwithin{equation}{section}
\begin{document}
 \bibliographystyle{amsplain}

 \title[Simplicial complexes of small codimension]{Simplicial complexes of small codimension}

\author{Matteo Varbaro} \address{Dipartimento di Matematica, Universita' di Genova, Via Dodecaneso 35, Genova 16146, Italy}
\author{Rahim Zaare-Nahandi}
      \address{Rahim Zaare-Nahandi\\School of Mathematics, Statistics \&
      Computer Science, University of Tehran, Tehran, Iran.}

\thanks{R. Zaare-Nahandi was supported in part by a grant from the University of Tehran}

\thanks{Emails: varbaro@dima.unige.it, rahimzn@ut.ac.ir}

\keywords{Cohen-Macaulay complex, Buchsbaum complex, ${\rm CM}_t$ complex, Serre condition $S_r$, Alexander dual, Eagon-Reiner theorem, Betti diagram, subadditivity, flag complex }

\subjclass[2010]{13H10, 13F55}

\begin{abstract}
We show that a Buchsbaum simplicial complex of small codimension must have large depth. More generally, we achieve a similar result for ${\rm CM}_t$ simplicial complexes, a notion generalizing Buchsbaum-ness, and we prove more precise results in the codimension 2 case. Along the paper, we show that the ${\rm CM}_t$ property is a topological invariant of a simplicial complex.
\end{abstract}

\maketitle

\section{Introduction}

In \cite{Ha 74}, Hartshorne proposed his tantalizing conjecture concerning smooth varieties of small codimension in some projective space. Precisely, if $R=K[x_1,\ldots ,x_n]$ is the polynomial ring in $n$ variables over a field $K$, the conjecture declaims:

\begin{conj} (Hartshorne)
If $I\subseteq R$ is a homogeneous ideal of height $h$ less than $(n-1)/3$ such that $\mathrm{Proj} R/I$ is nonsingular, then $I$ is a complete intersection.
\end{conj}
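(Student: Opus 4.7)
The plan is to split the conjecture into two steps: (i) show that smoothness of $\mathrm{Proj}\, R/I$ with $h<(n-1)/3$ forces $R/I$ to be arithmetically Cohen-Macaulay, and (ii) show that an arithmetically Cohen-Macaulay ideal $I$ whose vanishing locus is smooth and of codimension $h<(n-1)/3$ must be a complete intersection. In codimension two, step (ii) is essentially immediate from the Hilbert-Burch structure theorem: a height-two Cohen-Macaulay ideal is the ideal of maximal minors of some $t\times(t+1)$ matrix, and smoothness of $\V(I)\subset\mathbb{P}^{n-1}$ in combination with $h=2<(n-1)/3$ should force $t=1$, giving a regular sequence of length two.

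For step (i), the natural tool is cohomological. The Cohen-Macaulay condition on $R/I$ is the vanishing of the graded local cohomology modules $\H^i_\fm(R/I)$ for $i<\dim R/I$. Barth-Lefschetz and Grothendieck-Lefschetz type theorems, applied to the smooth projective scheme $\mathrm{Proj}\, R/I\subset\mathbb{P}^{n-1}$, yield isomorphisms between its cohomology and that of $\mathbb{P}^{n-1}$ in a range of degrees that grows precisely when the codimension shrinks. Combining these isomorphisms with the exact sequence relating sheaf cohomology to local cohomology should, in the range $h<(n-1)/3$, kill the intermediate $\H^i_\fm(R/I)$ and yield arithmetic Cohen-Macaulayness.

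Once $R/I$ is ACM, the extension of step (ii) beyond codimension two would require tight control of the minimal graded free resolution of $R/I$. In codimension three, the Buchsbaum-Eisenbud structure theorem for Gorenstein ideals is the natural starting point, and more generally one would hope that the smoothness of $\V(I)$, together with regularity bounds and subadditivity of syzygy degrees in the spirit of the present paper, forces the resolution to coincide with a Koszul complex on a regular sequence of length $h$.

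The hard part, and the reason Hartshorne's conjecture has remained open since 1974, is step (i) beyond codimension two. The available Barth-Lefschetz and formal-functions bounds are not sharp enough to establish arithmetic Cohen-Macaulayness in the whole range $h<(n-1)/3$; even the a priori weaker statement that such subvarieties are linearly normal is not known. Any genuine attack must produce a new cohomological vanishing theorem, arising perhaps from positivity properties of the normal bundle, reduction modulo $p$, or mixed-characteristic derived techniques, rather than a refinement of the syzygetic machinery used elsewhere in this paper.
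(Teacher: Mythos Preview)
The statement you are attempting to prove is Hartshorne's conjecture, which the paper states as a \emph{conjecture}, not a theorem. There is no proof of it in the paper; on the contrary, the authors write explicitly that ``the present article has no pretension to give new insights on the conjecture of Hartshorne.'' So there is nothing in the paper to compare your argument against, and any genuine proof would resolve a famous open problem.

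Your proposal is, in fact, not a proof either, and you say so yourself in the final paragraph: step~(i), arithmetic Cohen--Macaulayness of $R/I$ under the hypothesis $h<(n-1)/3$, is precisely the open content of the conjecture (already in the weaker form of linear normality). The Barth--Lefschetz range you invoke does not reach far enough to give the required vanishing of $\H^i_{\fm}(R/I)$ for all $i<\dim R/I$, and no refinement currently known closes that gap. Even your step~(ii) in codimension two is not as immediate as you suggest: Hilbert--Burch gives the $t\times(t+1)$ presentation, but the implication ``smooth and $n>7$ forces $t=1$'' is exactly the nontrivial input from the Evans--Griffith syzygy theorem that the paper cites to pass from Conjecture~1.1 to Conjecture~1.2. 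In short, what you have written is a reasonable survey of the standard two-step strategy and of why it stalls, but it is not a proof, and the paper does not claim one.
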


If $h=2$, then the condition $h<(n-1)/3$ is equivalent to $n>7$. In this case, by a result of Evans and Griffith \cite[Theorem 3.2]{EG 81}, the conjecture is equivalent to:

\begin{conj} 
If $I\subseteq R$ is a homogeneous ideal of height $2$ such that $\mathrm{Proj} R/I$ is nonsingular, and $n>7$, then $R/I$ is Cohen-Macaulay.
\end{conj}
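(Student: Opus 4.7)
The plan is to approach the conjecture through the Evans-Griffith equivalence, which transports Cohen-Macaulay-ness of $R/I$ to an arithmetically Cohen-Macaulay property of an associated rank-$2$ vector bundle on $\mathbb{P}^{n-1}$. Since $\h I = 2$, the module $R/I$ has Krull dimension $n-2$, and smoothness of $\mathrm{Proj}(R/I)$ forces the intermediate local cohomology modules $\H^i_\fm(R/I)$ for $0<i<n-2$ to be of finite length (where $\fm=(x_1,\ldots,x_n)$). Cohen-Macaulay-ness is equivalent to all of these vanishing, so the task reduces to killing a finite amount of deficiency concentrated at the origin.

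First, I would pass from $R/I$ to its second syzygy $M = \Syz^2(R/I)$, which by the Evans-Griffith theorem cited above is a reflexive graded $R$-module of rank $2$. Its sheafification $\widetilde{M}$ is a rank-$2$ vector bundle $\mathcal{E}$ on $\mathbb{P}^{n-1}$, and $R/I$ is Cohen-Macaulay if and only if $\mathcal{E}$ is arithmetically Cohen-Macaulay, i.e., $\H^i(\mathbb{P}^{n-1},\mathcal{E}(j))=0$ for $0<i<n-1$ and every twist $j \in \mathbb{Z}$. By Horrocks' splitting criterion this intermediate vanishing is equivalent to a decomposition $\mathcal{E}\cong\mathcal{O}(a)\oplus\mathcal{O}(b)$.

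Second, to force the splitting I would try to produce a nowhere vanishing global section of some twist $\mathcal{E}(d)$. The numerical hypothesis $n>7$, i.e., $n-1\geq 7$, together with Barth-Lefschetz-type restrictions on the Chern classes $c_1(\mathcal{E})$ and $c_2(\mathcal{E})$ arising from the smooth codimension-$2$ subscheme $\mathrm{Proj}(R/I)\subseteq\mathbb{P}^{n-1}$, should constrain the numerical invariants of $\mathcal{E}$ enough that a generic section of a high enough twist is forced to be nowhere zero. Once a line bundle injects as a subbundle of $\mathcal{E}$, a direct application of Horrocks' lemma yields the desired splitting, and feeding this back through the Evans-Griffith correspondence gives the Cohen-Macaulay conclusion.

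The main obstacle is precisely that producing the nowhere vanishing section on $\mathbb{P}^{n-1}$ in the range $n-1\geq 7$ is the content of Hartshorne's rank-$2$ splitting conjecture, which has resisted attack for half a century: the available partial results (Barth-Lefschetz, Severi-Zak bounds on secant varieties, Mohan Kumar and collaborators' splitting criteria for low Chern classes) do not reach the critical range. An unconditional proof would require either a new vanishing theorem in that range, or a genuinely new way to exploit that $\mathrm{Proj}(R/I)$ is \emph{globally} smooth rather than merely nonsingular in codimension one. The purpose of the present paper is to obtain unconditional analogues in the combinatorial Stanley-Reisner setting, where the extra rigidity of squarefree monomial ideals permits sharp depth bounds for Buchsbaum and $\mathrm{CM}_t$ complexes of small codimension.
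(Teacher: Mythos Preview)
The statement you are addressing is not a theorem of the paper but a \emph{conjecture}: it is the height-$2$ reformulation of Hartshorne's conjecture, obtained via the Evans--Griffith result \cite{EG 81}. The paper does not prove it and explicitly disclaims any attempt to do so (``The present article has no pretension to give new insights on the conjecture of Hartshorne''). There is therefore no paper proof to compare your proposal against.

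Your write-up is an accurate sketch of why the conjecture is hard rather than a proof. You correctly trace the Evans--Griffith translation to a rank-$2$ bundle on $\mathbb{P}^{n-1}$ and correctly identify the obstruction: producing a nowhere-vanishing section in the range $n-1\geq 7$ is exactly the open rank-$2$ splitting conjecture. Your final paragraph already concedes that the argument cannot be closed with current tools. So this is not a gap in your reasoning so much as a frank acknowledgment that the problem is open --- which matches the paper's own stance. If the intent was to supply a proof, none exists here; if the intent was to explain the landscape around the conjecture and motivate the paper's combinatorial detour, your summary is apt.
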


The present article has no pretension to give new insights on the conjecture of Hartshorne: the only result in this direction is Corollary \ref{c:smooth}, stating that $R/I$ has depth larger than $n-2h$ if furthermore $I$ admits a square-free initial ideal. Rather, this paper brings the philosophy of the conjecture to the world of combinatorial commutative algebra, as it had already been done, to some extent, in \cite{DHS13}.

If $\Delta$ is a simplicial complex in $n$ variables, $\mathrm{Proj} K[\Delta]$ is almost never smooth, so Hartshorne's conjecture is not interesting when stated for $\mathrm{Proj} K[\Delta]$. The notion of Cohen-Macaulay-ness in codimension $t$ was introduced, independently and with the sole difference concerning a purity matter, in \cite{MNS 11} and in \cite{HYZ 12}. In \cite{MNS 11} this concept was suggested as the right one to measure the singularities of a simplicial complex: $\Delta$ is Cohen-Macaulay in codimension $t$ (according to \cite{HYZ 12}) if and only if $\Delta$ is pure of singularity dimension less than $t-1$ (according to \cite{MNS 11}). 
In particular, if $\Delta$ has negative singularity dimension, it is Buchsbaum. So, somehow Buchbaum-ness plays the role of 'smooth-ness' for simplicial complexes. This way of thinking is also supported from the results in the recent paper \cite{CV 18}, which imply that, if the ideal defining a smooth projective variety has a square-free Gr\"obner degeneration, then the associated simplicial complex is Buchsbaum. With this definition in mind, the same philosophy that led Hartshorne to make his conjecture brings one to expect the following: 
If $\Delta$ is a Buchbaum simplicial complex with small codimension, then $K[\Delta]$ should have large depth.

\vskip 2mm

In this note, we show that if $\Delta$ is a $(d-1)$-dimensional Buchbaum simplicial complex on $d+2$ vertices, then $\depth K[\Delta]\geq d-1$. Moreover, in this case $K[\Delta]$ is not Cohen-Macaulay if and only if $\Delta$ is the Alexander dual of (the clique complex of) the $(d+2)$-cycle (Proposition \ref{p:chardepth}). More generally, if $\Delta$ is a $(d-1)$-dimensional Buchsbaum simplicial complex on $n$ vertices, then $\depth K[\Delta]\geq 2d-n+1$. Even more generally, if $\Delta$ is Cohen-Macaulay in codimension $t$, then $K[\Delta]$ satisfies the condition of Serre $S_{2d-n-t+2}$ (Corollary \ref{yan}).  Along the way, we also prove that being Cohen-Macaulay in codimension $t$ is a topological invariant (Theorem \ref{t:topinv}).

\bigskip

The paper is structured as follows: a brief review of some preliminaries and conventions is given in Section 2, where the topological invariance of Cohen-Macaulay-ness in an arbitrary  codimension is also proved. Section 3 is devoted to the connection between Cohen-Macaulay-ness of a simplicial complex in some codimension with linearity of the Stanley-Reisner ideal of the Alexander dual of the simplicial complex up to a certain step. This leads to a connection between Cohen-Macaulay-ness in a certain codimension with the $S_r$ condition of Serre.   Some corollaries and relevant examples are also given. In Section 4,  the case of codimension 2 simplicial complexes is analyzed in more detail, and a combinatorial proof of the main result of Section 3 in the codimension 2 case is provided.

\section{Preliminaries and conventions}

Let $R = K[x_1,\dots, x_n]$ be the ring of polynomials over a field $K$, equipped  with the standard grading. For integers $p\ge 1$ and $d\ge 2$, we say that a simplicial complex $\Delta$ on $n$ vertices satisfies the Green-Lazarsfeld property $N_{d,p}$ if $I_{\Delta}$ is generated in degree $d$ and the first $p$ steps of the minimal graded free resolution
$$\dots \longrightarrow  F_p \xrightarrow{\varphi_p} F_{p-1}  \xrightarrow{\varphi_{p-1}} \dots  \xrightarrow{\varphi_1} F_0 \longrightarrow I_{\Delta} \longrightarrow 0$$
of $I_{\Delta}$ are linear, in the sense that $\varphi_1, \dots, \varphi_{p-1}$ are represented by matrices of linear forms.

A simplicial complex $\Delta$ is said to satisfy the Serre's condition $S_r$ if $\widetilde{H}_i({\rm link}_\Delta F; K) $ vanishes for all $F \in \Delta$ and for all $i < {\rm min} \{r-1, {\rm dim}({\rm link}_\Delta F)\}$, where $\widetilde{H}_i (\Delta; K)$ is the $i$th reduced homology group of $\Delta$ over the field $K$. This is equivalent to the usual definition of the condition $S_r$ on $K[\Delta]$.

By a ${\rm CM}_t$ simplicial complex, we mean a pure simplicial complex $\Delta$ which is Cohen-Macaulay in codimension $t$, namely a simplicial complex such that ${\rm link}_\Delta F$ is Cohen-Macaulay for all $F\in \Delta$ with $|F| \ge t$.

\begin{rem}\label{r:serre}
Let $\Delta$ be a pure simplicial complex of dimension $d-1$. It follows by the definition that $\Delta$ satisfies the $S_r$ condition $\implies$ $\Delta$ is ${\rm CM}_{d-r}$. The vice versa is false, just think to a disconnected Buchsbaum simplicial complex $\Delta$ (such a $\Delta$ is ${\rm CM}_1$ but does not even satisfy $S_2$). On the other hand, we will show in Corollary \ref{yan} that $\Delta$ is ${\rm CM}_t$ on $n$ vertices $\implies$ $\Delta$ satisfies the $S_{2d-n-t+2}$ condition.
\end{rem}

\begin{rem}
The notion of singularity dimension has been considered in \cite{MNS 11} as follows: a simplicial complex $\Delta$ has singularity dimension less than $m$ if ${\rm link}_\Delta F$ is Cohen-Macaulay  for all $F\in \Delta$ with $\dim F\geq m$ (by convention, $\dim \emptyset = -1$).
So a simplicial complex $\Delta$ is ${\rm CM_t}$ if and only if it is pure and has singularity dimension less than $t-1$.
\end{rem}

\begin{rem}
The phrase ``Cohen-Macaulay in codimension $t$'' in the present paper has a different meaning from the phrase ``Cohen-Macaulay in codimension $c$'' considered in \cite{MNS 11}. In fact, 
according to \cite[Definition 3.6]{MNS 11}, even if $\Delta$ is a pure simplicial complex of dimension $d-1$, then in \cite{MNS 11} ``$\Delta$ Cohen-Macaulay in codimension $c$'' means that ${\rm link}_\Delta F$ is Cohen-Macaulay for all $F\in \Delta$ with $|F| = d-1-c$.
\end{rem}

For an $ R$-module $M$ we write ${\rm dim}M$ for the Krull dimension of $M$; when $M=0$ we write by convention ${\rm dim}M=-\infty$.

\begin{rem}
Notice that $\Delta$ is a pure $(d-1)$-dimensional simplicial complex if and only if $${\rm dim \ Ext}_R^{n-i}(K[\Delta],R)< i \ \ \ \forall \ i< d.$$ On the other hand, it has been proved in \cite[Corollary 7.4]{MNS 11} that $\Delta$ has singularity dimension $<m$ if and only if $${\rm dim \ Ext}_R^{n-i}(K[\Delta],R)\leq m \ \ \ \ \forall \ i<d.$$ So, if $\Delta$ has singularity dimension $<m$ and ${\rm depth} K[\Delta]>m$, then $\Delta$ is pure. In particular, since ${\rm depth} K[\Delta]>0$ for any simplicial complex $\Delta$, the following are equivalent:
\begin{enumerate}
\item $\Delta$ is Buchsbaum.
\item $\Delta$ has singularity dimension $<0$.
\item $\Delta$ is ${\rm CM_1}$.
\end{enumerate}
\end{rem}

A property of a simplicial complex $\Delta$ is a topological invariant of $\Delta$ if it holds for any simplicial complex whose geometric realization is homeomorphic to the one of $\Delta$. Next we prove that the properties of satisfying $S_r$, being ${\rm CM}_t$, and having singularity dimension $<m$ are topological invariants. This fact has essentially been proved by Yanagawa in \cite{Y 11}. We report his result in our context for the convenience of the reader. We keep the same notations used in \cite{Y 11}.

\begin{thm}\label{t:topinv}
Let $\Delta$ be a $(d-1)$-dimensional simplicial complex on $n$ vertices. Then, for all $i\in\mathbb{N}$,
$${\rm dim \ Ext}_R^{n-i}(K[\Delta],R)$$
is a topological invariant of $\Delta$. In particular, satisfying $S_r$, being ${\rm CM}_t$, and having singularity dimension $<m$ are topological invariants.

\end{thm}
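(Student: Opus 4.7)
The plan is to express $\dim \Ext_R^{n-i}(K[\Delta], R)$ through local cohomology and then invoke Munkres' local-to-global formula to put the answer in a manifestly topological form; this essentially follows Yanagawa's strategy in \cite{Y 11}.

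First, combining local duality with Hochster's formula, I would show that
$$
\dim \Ext_R^{n-i}(K[\Delta], R) \;=\; \max\{\, |F| \,:\, F \in \Delta,\ \widetilde{H}^{i-|F|-1}(\mathrm{link}_\Delta F;\,K) \neq 0 \,\}.
$$
Indeed, $\Ext_R^{n-i}(K[\Delta], R)$ is, up to shift, the graded Matlis dual of $H^i_{\fm}(K[\Delta])$, and Hochster's formula identifies the nonzero $\mathbb{Z}^n$-graded pieces of the latter (supported in the squarefree degrees $-\chi_F$ with $F \in \Delta$) with $\widetilde{H}^{i-|F|-1}(\mathrm{link}_\Delta F;\,K)$. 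Taking Krull dimension then picks out the largest $|F|$ that contributes, because the associated primes of $\Ext_R^{n-i}(K[\Delta], R)$ lie among the squarefree primes $P_F = (x_j : j \notin F)$, and $\dim R/P_F = |F|$.

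Second, I would apply Munkres' classical local-cohomology formula: for every $F \in \Delta$ and every point $p$ in the relative interior of the geometric simplex $|F|$,
$$
\widetilde{H}^{i-|F|-1}(\mathrm{link}_\Delta F;\,K) \;\cong\; H^{i-1}\bigl(|\Delta|,\,|\Delta|\setminus\{p\};\,K\bigr).
$$
The key cancellation is that the shift $|F|$ appearing in Munkres' formula absorbs exactly the $-|F|$ produced by Hochster's formula, leaving the triangulation-free index $i-1$ on the right-hand side. Consequently the condition inside the maximum depends only on the topological pair $(|\Delta|,p)$.

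Finally, define the subset
$$
S_i \;:=\; \bigl\{\, p \in |\Delta| \,:\, H^{i-1}(|\Delta|,\,|\Delta|\setminus\{p\};\,K) \neq 0 \,\bigr\},
$$
which is a topological invariant of $|\Delta|$. Because the relative cohomology in question is constant along the relative interior of every face of every triangulation, $S_i$ is a union of open simplicial cells, and the largest $|F|$ contributing to the maximum equals $1$ plus the topological dimension of $S_i$. Thus $\dim \Ext_R^{n-i}(K[\Delta], R)$ is a topological invariant of $|\Delta|$. For the ``in particular'' claims, each of the three properties (satisfying $S_r$, being $\mathrm{CM}_t$, having singularity dimension $<m$) is characterized by bounds on the numbers $\dim \Ext_R^{n-i}(K[\Delta], R)$ as $i$ ranges in an interval controlled by $d$, itself topological because $d-1 = \dim|\Delta|$, by the remarks above and \cite[Corollary 7.4]{MNS 11}; so each property descends. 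The most delicate point in the whole argument is keeping careful track of the homological degrees when composing Hochster's and Munkres' formulas---the rest is bookkeeping.
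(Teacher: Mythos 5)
Your argument is, at its core, the same as the paper's: the paper simply cites Yanagawa's \cite[Theorem 4.1]{Y 11}, which identifies ${\rm dim}\,{\rm Ext}_R^{n-i}(K[\Delta],R)-1$ with the dimension of the support of $\mathcal{H}^{-i+1}(\mathcal{D}^{\bullet}_X)$, and the stalk of that sheaf at a point $p$ in the relative interior of $F$ is exactly the local (co)homology $H_{i-1}(|\Delta|,|\Delta|\setminus\{p\};K)\cong \widetilde{H}_{i-|F|-1}({\rm link}_\Delta F;K)$ of Munkres' formula. So your Hochster-plus-Munkres computation is an unpacking of the very result the paper invokes, and your index bookkeeping is correct; your set $S_i$ is precisely the support of Yanagawa's sheaf.

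There is, however, one degenerate case where your final formula ${\rm dim}\,{\rm Ext}_R^{n-i}(K[\Delta],R)=1+\dim S_i$ breaks down, and it is exactly the case the paper treats separately in its first paragraph. The face $F=\emptyset$ contributes $|F|=0$ to your maximum whenever $\widetilde{H}^{i-1}(\Delta;K)\neq 0$, but the empty face has no relative interior point in $|\Delta|$, so this contribution is invisible to $S_i$. Concretely, if the only contributing face is $\emptyset$ then $S_i=\emptyset$, and your formula cannot distinguish the case ${\rm Ext}_R^{n-i}(K[\Delta],R)=0$ (dimension $-\infty$) from the case where it is nonzero but zero-dimensional. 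The fix is cheap and still topological --- supplement $\dim S_i$ with the single Boolean datum $\widetilde{H}^{i-1}(|\Delta|;K)\neq 0$, which is what the paper does when it first disposes of the case ${\rm dim}\,{\rm Ext}_R^{n-i}(K[\Delta],R)\leq 0$ --- but as written your proof conflates these two situations. A second, much smaller point: your claim that the topological dimension of $S_i$ (a union of open cells, generally neither open nor closed) equals the maximal dimension of a cell it contains deserves a word of justification, e.g.\ via the countable sum theorem applied to an $F_\sigma$ decomposition of each open cell. With these repairs the argument is complete and matches the paper's.
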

\begin{proof}
Let $X$ be a topological realization of $\Delta$. If ${\rm dim \ Ext}_R^{n-i}(K[\Delta],R)\leq 0$, then ${\rm dim \ Ext}_R^{n-i}(K[\Delta],R)=0$ if and only if ${\rm Ext}_R^{n-i}(K[\Delta],R)\neq 0$ if and only if $\widetilde{H}^{i-1}(X;K)\neq 0$, so we can assume that ${\rm dim \ Ext}_R^{n-i}(K[\Delta],R)>0$. 

Notice that ${\rm Ext}_R^{n-i}(K[\Delta],R)=0$ for $i>d$ or $i\leq 0$, and that ${\rm Ext}_R^{n-d}(K[\Delta],R)$ is always $d$-dimensional. Therefore we will assume that $0<i<d$.  In this situation, \cite[Theorem 4.1]{Y 11} yields that ${\rm dim \ Ext}_R^{n-i}(K[\Delta],R)-1$ is equal to the dimension of the support of the sheaf $\mathcal{H}^{-i+1}(\mathcal{D}^{\bullet}_X)$ on $X$, where $\mathcal{D}^{\bullet}_X$ is the Verdier dualizing complex of $X$ with coefficients in $K$. So we have that ${\rm dim \ Ext}_R^{n-i}(K[\Delta],R)$ is a topological invariant of $\Delta$.

For the last part, notice that being pure is obviously a topological invariant and:
\begin{enumerate}
\item $\Delta$ satisfies $S_r$ (for $r\geq 2$) $\iff$ ${\rm dim \ Ext}_R^{n-i}(K[\Delta],R)\leq i-r$ $\forall \ i<d$.
\item $\Delta$ has singularity dimension $<m$ $\iff$ ${\rm dim \ Ext}_R^{n-i}(K[\Delta],R)\leq m$ $\forall \ i<d$.
\item $\Delta$ is ${\rm CM_t}$ $\iff$ $\Delta$ is pure and ${\rm dim \ Ext}_R^{n-i}(K[\Delta],R)< t$ $\forall \ i<d$.
\end{enumerate}
\end{proof}

For further concepts and notations on simplicial complexes and combinatorial commutative algebra we refer to the standard books  \cite{St 95}, \cite{HH 11} and  \cite{MS 05}.

\section{The  ${\rm CM}_t$ property of simplicial complexes versus the Serre condition $S_r$}

In this section, for a simplicial complex $\Delta$ of dimension $d-1$ on $n$ vertices, applying a subadditivity result of Herzog and Srinivasan to the Betti diagram of the Stanley-Reisner ideal of $\Delta$, it is shown that if $\Delta$ satisfies ${\rm CM}_t$ for some $t\ge 0$, then $\Delta^\vee$ satisfies the 
$N_{n-d, 2d-n-t+2}$ condition. In other words, the minimal graded free resolution of $I_{\Delta^{\vee}}$ is linear on the first $2d-n-t+2$ steps. This leads to the implication that if $\Delta$ is $ {\rm CM}_t$ for some $t\ge 0$, then the Stanley-Reisner ring of $\Delta$ satisfies the $S_{2d-n-t+2}$ condition of Serre. \\

First we recall a generalization of the Eagon-Reiner's theorem given in \cite{HFYZ 17}. 

\begin{thm} \label{E-R} \cite[Theorem 3.1]{HFYZ 17}. Let $\Delta$ be a simplicial complex on
on $n$ vertices, $\Delta^\vee$ its Alexander dual and
$I_\Delta \subset R$ the Stanley-Reisner ideal of
$\Delta$. Then the following are equivalent:
\begin{itemize}
\item[(i)] $\Delta^\vee$ is a ${\rm CM}_t$ simplicial complex of dimension $d-1$.
\item[(ii)] $\beta_{0,j}(I_\Delta)=0 \ \ \forall \ j>n-d$ and $\beta_{i,i+j}(I_\Delta)=0 \ \ \forall \ j>n-d$ and $i+j\le n-t$.
\end{itemize}
I.e., the Betti diagram $\beta_{i,i+j}(I_\Delta)$ looks like in Figure 1.

\begin{center}
\begin{tikzpicture}[>=stealth][line width=1pt]
\draw [line width=1.pt] (-.8,0)--(7.7,0); \draw [line width=1.pt]
(0,-6.2)--(0,.8); \draw [line width=1.2pt]
(-.6,.6)--(0,0);\draw(-.2,.6) node{$i$};\draw(-.6,.3) node{$j$};
\draw(.4,.6) node{0};\draw(1.,.6) node{1};\draw(1.6,.6)
node{$\dots$};\draw(2.09,.6) node{$i$}; \draw(2.75,.6) node{\dots};
\draw(4.0,.6) node{$d-t-1$};\draw(5.35,.6)
node{$d-t$};\draw(6.15,.6) node{\dots};\draw(7.2,.6)
node{$\mbox{projdim}$};
\draw(-.85,-.4) node{$n-d$}; \draw(.4,-.4) node{$*$};\draw(1.,-.4)
node{$*$};\draw(1.6,-.4) node{$\dots$};\draw(2.12,-.4)
node{$\mbox{l. s.}$}; \draw(2.75,-.4) node{\dots}; \draw(4.0,-.4)
node{$*$};\draw(5.35,-.4) node{$*$};\draw(6.15,-.4)
node{\dots};\draw(7.2,-.4) node{$*$}; \draw[dashed](.4,-.4)
node{$*$};
\draw(-.8,-1.) node{$n-d+1$};\draw(.4,-1.) node{0};\draw(1.,-1.)
node{0};\draw(1.6,-1.) node{$\dots$};\draw(2.05,-1.) node{0};
\draw(2.75,-1.) node{\dots}; \draw(4.0,-1.) node{0};\draw(5.35,-1.)
node{$*$};
\draw(-.85,-1.6) node{$\vdots$};\draw(.4,-1.6)
node{$\vdots$};\draw(1,-1.6) node{$\vdots$};\draw(1.6,
-1.7)ellipse(3.4pt and 6pt);
\draw(-.9,-2.4) node{$ j $};\draw(.4,-2.4) node{0};\draw(1.,-2.4)
node{0}; \draw[dashed](2.15,-2.4) node{$\beta_{i,i+j}$};
\draw(-.85,-3.) node{$\vdots $};\draw(.4,-3.) node{$\vdots
$};\draw(1,-3.) node{$\vdots $};
 \draw(-.85,-3.89)node{$n-t-1$};\draw(.4,-3.89) node{0};\draw(1.,-3.89) node{0};
\draw(1.6,-3.89) node{$*$};
 \draw(-.85,-4.459)node{$n-t$};\draw(.4,-4.459) node{0};\draw(1.,-4.459) node{$*$};
 \draw(4.7,-4.459) node[scale=3.5]{$*$};
\draw(-.85,-5.) node{\vdots}; \draw(.4,-5.) node{\vdots};
\draw(1,-5.) node{\vdots};
\draw(-.85,-5.9) node{$\mbox{regularity}$};\draw(.4,-5.9) node{0};
\draw(1.,-5.9) node{$*$};
\draw [line width=.9pt,color=blue](0, -.7)--(4.8,-.7);
 \draw [line width=.9pt,color=blue](.685,-4.38)--(4.8,-.7);
   \draw [line width=.9pt,color=blue](.685, -6.2)--(.685,-4.377);
 \draw[ dotted,color=blue] (58pt,0pt) -- (58pt,-7pt);
 \draw[ dotted,color=blue] (58pt,-16pt) -- (58pt,-23pt);
\draw[ dotted,color=blue] (58pt,-36pt) -- (58pt,-60pt);
 \draw[dotted,color=blue] (0pt,-68pt) -- (47pt,-68pt);
 \draw [thick](2.5,-6.9) node {$ \mbox{Figure 1. The shape of the
Betti diagram of $I_{\Delta}$ when $\Delta^\vee$ is $\textrm{CM}_t$ }
$};
\end{tikzpicture}
\end{center}
\end{thm}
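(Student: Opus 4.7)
The strategy is to build a dictionary converting vanishings in the Betti table of $I_\Delta$ into vanishings of reduced (co)homology of links in $\Delta^\vee$, and then invoke Reisner's criterion link-by-link. Three classical ingredients are needed in sequence: Hochster's formula, combinatorial Alexander duality, and Reisner's criterion.

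First I would establish a master formula. Hochster's formula reads
\[
\beta_{i,i+j}(I_\Delta)\;=\;\sum_{W\subseteq [n],\ |W|=i+j}\dim_K\widetilde{H}_{j-2}(\Delta_W;K).
\]
Combining this with simplicial Alexander duality $\widetilde{H}_k(\Delta_W;K)\cong \widetilde{H}^{|W|-k-3}((\Delta_W)^\vee;K)$ (applied inside the vertex set $W$) and the standard identity $(\Delta_W)^\vee=\mathrm{link}_{\Delta^\vee}([n]\setminus W)$, then setting $G=[n]\setminus W$, one obtains
\[
\beta_{i,i+j}(I_\Delta)\;=\;\sum_{G\in\Delta^\vee,\ |G|=n-i-j}\dim_K\widetilde{H}^{i-1}(\mathrm{link}_{\Delta^\vee} G;K).
\]

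Next I would translate the two index conditions of (ii) through the substitution $|G|=n-i-j$: the inequality $j>n-d$ becomes $|G|<d-i$, equivalently $i-1<d-|G|-1$, and $i+j\le n-t$ becomes $|G|\ge t$. Once $\Delta^\vee$ is known to be pure of dimension $d-1$, the quantity $d-|G|-1$ coincides with $\dim\mathrm{link}_{\Delta^\vee} G$, so the vanishings in (ii) are equivalent (over a field, via universal coefficients) to
\[
\widetilde{H}_{i-1}(\mathrm{link}_{\Delta^\vee} G;K)=0\quad\text{for all }G\in\Delta^\vee\text{ with }|G|\ge t\text{ and all }i-1<\dim\mathrm{link}_{\Delta^\vee} G.
\]
By Reisner's criterion applied separately to each link, this is precisely the assertion that $\mathrm{link}_{\Delta^\vee} G$ is Cohen-Macaulay for every $G\in\Delta^\vee$ with $|G|\ge t$, i.e.\ that $\Delta^\vee$ is $\mathrm{CM}_t$.

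The remaining piece is purity, which is handled by the condition $\beta_{0,j}(I_\Delta)=0$ for $j>n-d$. Since facets of $\Delta^\vee$ correspond by complementation to minimal non-faces of $\Delta$, the assumption $\dim\Delta^\vee=d-1$ produces a minimal non-face of $\Delta$ of size $n-d$ (and none smaller), while $\beta_{0,j}(I_\Delta)=0$ for $j>n-d$ forbids minimal non-faces of larger size; together they force every minimal non-face of $\Delta$ to have size exactly $n-d$, i.e.\ $\Delta^\vee$ is pure of dimension $d-1$. The converse implication is immediate. I expect the only real obstacle to be the careful index bookkeeping: each of the three operations (Hochster, Alexander duality, Reisner) introduces a small shift, and one must verify that the two quantifiers in (ii) correspond exactly to the two quantifiers in the link formulation of $\mathrm{CM}_t$. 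Once the shifts are lined up, the theorem is a straightforward translation.
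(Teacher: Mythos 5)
The paper does not prove this statement at all: it is imported verbatim as a citation of \cite[Theorem 3.1]{HFYZ 17} (a generalization of the Eagon--Reiner theorem), so there is no in-paper argument to compare against. Your reconstruction is correct and follows the standard route one would expect the cited source to take: Hochster's formula, combinatorial Alexander duality via the identity $(\Delta_W)^\vee=\mathrm{link}_{\Delta^\vee}([n]\setminus W)$, and Reisner's criterion, with the index bookkeeping ($j-2\mapsto i-1$, $|G|=n-i-j$, so $j>n-d\iff i-1<d-|G|-1$ and $i+j\le n-t\iff |G|\ge t$) all checking out. Two small points worth making explicit in a write-up: (1) when you say Reisner is ``applied separately to each link,'' the reason the two quantifier sets match is that $\mathrm{link}_{\mathrm{link}_{\Delta^\vee}G}F=\mathrm{link}_{\Delta^\vee}(G\cup F)$ with $|G\cup F|\ge t$, so the family of vanishings demanded by Reisner for all links of faces of size $\ge t$ is exactly the family indexed by faces of size $\ge t$; (2) the statement is implicitly normalized so that $d-1=\dim\Delta^\vee$ (equivalently $n-d$ is the initial degree of $I_\Delta$) --- your treatment of purity via $\beta_{0,j}=0$ for $j>n-d$ together with this normalization is the right reading. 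Neither point is a gap.
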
 

On the other hand, Herzog and Srinivasan \cite{HS 15} proved the following ``subadditivity'' result on 
the Betti numbers of monomial ideals.

\begin{thm}\label{HS} \cite[Corollary 4]{HS 15}. Let $I = (u_1,\dots, u_m)$ be a monomial ideal of 
$R$, and let $e = {\rm max}_\ell \{ {\rm deg}(u_\ell)\}$. Then for all $j_0 \in \mathbb{Z}$:
\begin{equation}\label{eq:M}
\beta_{i,j}(I) = 0 \ \ \forall \ j> j_0 \implies \beta_{i+1,j}(I) = 0 \ \forall \ j> j_0+e. 
\end{equation}
\end{thm}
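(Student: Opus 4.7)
The plan is to pass to the multigraded setting and to prove a combinatorial refinement of (\ref{eq:M}). For a monomial ideal $I$, each Betti number decomposes as $\beta_{i,j}(I)=\sum_{\sigma\in\N^n,\,|\sigma|=j}\beta_{i,\sigma}(I)$, and every multidegree $\sigma$ supporting a nonzero Betti number of $I$ lies in the lcm-lattice of the generators $u_1,\dots,u_m$. It therefore suffices to prove the following claim: if $\beta_{i+1,\sigma}(I)\neq 0$, there exist a minimal generator $u_\ell$ of $I$ and a multidegree $\sigma'\in\N^n$ such that $\beta_{i,\sigma'}(I)\neq 0$ and $m_\sigma=\mathrm{lcm}(m_{\sigma'},u_\ell)$. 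Granted the claim, $|\sigma|\leq|\sigma'|+\deg u_\ell\leq|\sigma'|+e$, so (\ref{eq:M}) follows by contraposition: if $\beta_{i+1,\sigma}(I)\neq 0$ with $|\sigma|>j_0+e$, the claim yields $\sigma'$ with $\beta_{i,\sigma'}(I)\neq 0$ and $|\sigma'|>j_0$, contradicting the hypothesis.

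To prove the claim, I would induct on the number $m$ of generators of $I$ via the mapping cone construction. Setting $I'=(u_1,\dots,u_{m-1})$, the short exact sequence
$$0\to R/(I':u_m)(-\deg u_m)\xrightarrow{\cdot u_m} R/I'\to R/I\to 0$$
involves the colon $I':u_m$, itself a monomial ideal whose minimal generators $u_j/\gcd(u_j,u_m)$ have degree at most $e$, so the inductive hypothesis applies to both $I'$ and $I':u_m$. The cone of the induced map between their minimal multigraded resolutions is a (typically non-minimal) multigraded free resolution of $R/I$, linking its Betti numbers in homological degrees $i+1$ and $i$ to those of $R/I'$ and $R/(I':u_m)$ in a combinatorially controlled way (one passes between $I$ and $R/I$ at the cost of a single homological shift).

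The main obstacle is the non-minimality of the mapping cone: one must identify which summands cancel upon passage to the minimal resolution and verify that the lcm-lattice linkage encoded in the claim survives these Buchberger-type consecutive cancellations. This multigraded bookkeeping is the technical heart of the argument. Once performed, every multidegree $\sigma$ that survives in homological degree $i+1$ of the minimal resolution of $R/I$ either descends from a surviving multidegree in the resolution of $R/I'$, or arises by multiplication by $u_m$ from a surviving multidegree $\sigma''$ of $R/(I':u_m)$, and in either case the inductive hypothesis supplies the required $\sigma'$ in homological degree $i$ of $R/I$.
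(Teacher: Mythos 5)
The paper offers no proof of this statement: it is quoted verbatim from Herzog--Srinivasan \cite[Corollary 4]{HS 15}, whose argument runs through the simplicial (upper Koszul complex) description of the multigraded Betti numbers of a monomial ideal and a Mayer--Vietoris/deletion-link argument. That argument produces, for each $\sigma$ with $\beta_{i+1,\sigma}(I)\neq 0$, a multidegree $\sigma'$ dividing $\sigma$ with $\beta_{i,\sigma'}(I)\neq 0$ and $|\sigma|-|\sigma'|\leq e$, but it does \emph{not} produce an lcm-decomposition $x^{\sigma}=\mathrm{lcm}(x^{\sigma'},u_\ell)$. Your intermediate Claim is therefore substantially stronger than the theorem: it is the ``lattice complement'' property, which is known only for special classes of monomial ideals and is not something that falls out of routine multigraded bookkeeping.

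Independently of whether the Claim is true, the proposed induction does not close. The long exact Tor sequence of your short exact sequence (equivalently, the mapping cone) only yields the inequality $\beta_{i+1,\sigma}(R/I)\leq \beta_{i+1,\sigma}(R/I')+\beta_{i,\sigma-a_m}(R/(I':u_m))$, where $a_m=\deg u_m$; so from $\beta_{i+1,\sigma}(R/I)\neq 0$ you learn that one of the two \emph{auxiliary} ideals has a nonzero Betti number, and the inductive hypothesis then hands you a multidegree $\sigma'$ with $\beta_{i,\sigma'}(R/I')\neq 0$ (or the analogue for the colon ideal). But nonvanishing of a Betti number of $I'$ or of $I':u_m$ does not imply nonvanishing of the corresponding Betti number of $I$: adjoining the generator $u_m$ can destroy exactly that Betti number via the consecutive cancellations you mention, and there is no canonical control over which summands cancel. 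For instance, $I'=(x^2,y^2)$ has $\beta_{1,x^2y^2}(I')=1$, while $I=(x^2,xy,y^2)$ has $\beta_{1,x^2y^2}(I)=0$ (its first syzygies sit in multidegrees $x^2y$ and $xy^2$). So the concluding sentence ``in either case the inductive hypothesis supplies the required $\sigma'$ in homological degree $i$ of $R/I$'' is a genuine gap, not a technicality: it is precisely the point where this kind of approach to subadditivity breaks down, and it is why the published proof works instead with homology of the Koszul simplicial complexes attached to the single ideal $I$.
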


Now we prove the main result of the paper.

\begin{thm}\label{thm:main}
Let $\Delta$ be a $(d-1)$-dimensional ${\rm CM}_t$ simplicial complex on $n$ vertices. Then $\Delta^\vee$ satisfies the 
$N_{n-d, 2d-n-t+2}$ condition.
\end{thm}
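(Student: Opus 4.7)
The strategy is to apply Theorem~\ref{E-R} to the pair $(\Delta^\vee,\Delta)$ in order to read off vanishings in the Betti diagram of $I_{\Delta^\vee}$, and then to use the Herzog--Srinivasan subadditivity result (Theorem~\ref{HS}) to bootstrap the linear strand one homological step at a time.

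Since $(\Delta^\vee)^\vee=\Delta$ is ${\rm CM}_t$ of dimension $d-1$, Theorem~\ref{E-R} applied with $\Delta^\vee$ in the role of ``$\Delta$'' yields $\beta_{0,j}(I_{\Delta^\vee})=0$ for every $j>n-d$ and $\beta_{i,i+j}(I_{\Delta^\vee})=0$ whenever $j>n-d$ and $i+j\le n-t$. The first assertion, together with the fact that the minimal non-faces of $\Delta^\vee$ are the complements of the facets of $\Delta$ and hence have cardinality $n-d$, shows that $I_{\Delta^\vee}$ is generated in degree exactly $n-d$. Rewriting the second vanishing in internal degrees $k=i+j$ gives
\[
\beta_{i,k}(I_{\Delta^\vee})=0 \quad\text{whenever}\quad n-d+i<k\le n-t.
\]

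I would then prove by induction on $i$ that $\beta_{i,k}(I_{\Delta^\vee})=0$ for all $k>n-d+i$ and every $0\le i\le 2d-n-t+1$; this is precisely what $\Delta^\vee$ satisfying $N_{n-d,\,2d-n-t+2}$ asks for. The base case $i=0$ is the generation statement. For the inductive step, assume the claim at step $i$. Theorem~\ref{HS}, applied with $j_0=n-d+i$ and maximum generator degree $e=n-d$, yields $\beta_{i+1,k}(I_{\Delta^\vee})=0$ for every $k>2(n-d)+i$. Combined with the Eagon--Reiner-type vanishing $\beta_{i+1,k}(I_{\Delta^\vee})=0$ for $n-d+i+1<k\le n-t$, these two ranges together cover every $k>n-d+i+1$ exactly when $2(n-d)+i\le n-t$, i.e.\ when $i\le 2d-n-t$.

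The crux of the argument --- and the origin of the bound $2d-n-t+2$ --- is this last inequality: the induction propagates as long as $i\le 2d-n-t$ and stops at that threshold, because beyond it the subadditivity range $[2(n-d)+i+1,\infty)$ and the Eagon--Reiner range $[n-d+i+2,n-t]$ no longer overlap. Running the induction to its ceiling yields linearity at every step $0,1,\ldots,2d-n-t+1$; equivalently, the maps $\varphi_1,\ldots,\varphi_{2d-n-t+1}$ in the minimal free resolution of $I_{\Delta^\vee}$ have matrices of linear forms, which is precisely the $N_{n-d,\,2d-n-t+2}$ condition.
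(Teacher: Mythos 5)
Your proposal is correct and follows essentially the same route as the paper: apply the generalized Eagon--Reiner theorem to get the vanishing region $n-d+i<k\le n-t$ in the Betti diagram of $I_{\Delta^\vee}$, then induct on the homological degree using Herzog--Srinivasan subadditivity with $e=n-d$, the induction propagating exactly while the subadditivity range and the Eagon--Reiner range overlap, i.e.\ for $i\le 2d-n-t$. The only cosmetic difference is that the paper explicitly disposes of the trivial case $2d-n-t+2\le 1$ up front, which your base case handles implicitly.
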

\begin{proof}
Notice that $I_{\Delta^{\vee}}$ is generated in degree $n-d$. Hence the assertion is trivially valid for $2d-n-t+2\le 1$. Therefore, we may assume that $2d-n-t\ge 0$. Then, \eqref{eq:M} gives us 
\[\beta_{i,j}(I_{\Delta^{\vee}})=0 \ \ \forall \ j> j_0 \implies \beta_{i+1,j}(I_{\Delta^{\vee}}) = 0 \ \ \forall \ j> j_0+n-d.\]
By Theorem \ref{E-R}, we know that, for all $i\in\mathbb{N}$, 

\begin{equation}\label{eq:1}
\beta_{i,j}(I_{\Delta^{\vee}})=0 \ \ \forall \ i+n-d<j\leq n-t,
\end{equation}
and 

\begin{equation}\label{eq:2}
\beta_{0,j}(I_{\Delta^{\vee}})=0 \ \ \forall \ j>n-d.
\end{equation}
Now, suppose that $1\leq i\leq 2d-n-t+1$, and assume we have already proved that

\begin{equation}\label{eq:i}
\beta_{i-1,j}(I_{\Delta^{\vee}})=0 \ \ \forall \ j>i-1+n-d.
\end{equation}
By \eqref{eq:i} together with \eqref{eq:M} we have $\beta_{i,j}(I_{\Delta^{\vee}})=0$ for all $ j>i-1+2n-2d$. In particular, we have $\beta_{i,j}(I_{\Delta^{\vee}})=0$  for $i= 2d-n-t+1$, $j > (2d-n-t+1)-1+2n-2d = n-t$. On the other hand \eqref{eq:1} guarantees us that $\beta_{i,j}(I_{\Delta^{\vee}})=0$ for all $i+n-d<j\leq n-t$. Putting all together we get

\[\beta_{i,j}(I_{\Delta^{\vee}})=0 \ \ \forall \ j>i+n-d.\]
\end{proof}

In \cite{TV 15}  and, independently, in \cite{Ya 15}, the following refinement of the result of Herzog and Srinivasan is proved:

\begin{thm}\label{TorVar} \cite[Theorem 6.2, the $\mathbb{Z}$-graded part]{TV 15}. With the notation of Theorem \ref{HS}, one has:
$$\beta_{i,k}(I) = 0, \forall k=j_0,\dots, j_0+e-1 \implies \beta_{i+1,j_0+e}(I) = 0.$$
\end{thm}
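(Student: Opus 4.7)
The plan is to work multigradedly. Since $I$ is a monomial ideal, every $\mathbb{Z}$-graded Betti number splits as $\beta_{i,j}(I)=\sum_{|\sigma|=j}\beta_{i,\sigma}(I)$, where $\sigma$ ranges over $\mathbb{N}^n$ and $|\sigma|$ denotes its total degree. So it suffices to show $\beta_{i+1,\sigma}(I)=0$ for every multidegree $\sigma$ with $|\sigma|=j_0+e$, under the assumption that $\beta_{i,\tau}(I)=0$ for every $\tau$ with $|\tau|\in\{j_0,\dots,j_0+e-1\}$.

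Next I would translate these multigraded Betti numbers into simplicial homology via the upper Koszul / Hochster-type formula available for monomial ideals: for every $\sigma\in\mathbb{N}^n$ one has $\beta_{i+1,\sigma}(I)\cong \widetilde{H}_{i-1}(K^\sigma;K)$, where $K^\sigma$ is the simplicial complex on the minimal monomial generators of $I$ whose faces are those subsets $F$ with $\mathrm{lcm}(F)$ strictly dividing $x^\sigma$. The hypothesis then becomes the vanishing of $\widetilde{H}_{i-2}(K^\tau;K)$ for all $\tau$ with $|\tau|$ in the window $\{j_0,\dots,j_0+e-1\}$. So the statement to prove reduces to a purely topological/combinatorial one about these upper Koszul complexes.

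Now pick such a $\sigma$ and, for contradiction, a non-trivial cycle $z$ in $\widetilde{H}_{i-1}(K^\sigma;K)$. The heart of the argument is a "peeling" step: since every minimal generator of $I$ has degree at most $e$ and $|\sigma|=j_0+e\geq e$, one can select a variable $x_\ell$ appearing in $\sigma$ so that the multidegree $\tau=\sigma-a\,\mathbf{e}_\ell$, for a suitable $1\leq a\leq e$, still satisfies $\tau\geq \deg\mathrm{lcm}(F)$ for every face $F$ supporting $z$. Restricting $K^\sigma$ along this coordinate and using the Mayer--Vietoris/long exact sequence comparing $K^\sigma$ with $K^\tau$ and $K^{\sigma-\mathbf{e}_\ell}$ produces a non-vanishing class in $\widetilde{H}_{i-2}(K^\tau;K)$ with $|\tau|$ strictly less than $j_0+e$, while $a\leq e$ guarantees $|\tau|\geq j_0$. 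This contradicts the hypothesis and closes the argument.

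The main obstacle is the choice of the variable $x_\ell$ and of $a$ in the peeling step: one has to ensure simultaneously that (i) the resulting class in $\widetilde{H}_{i-2}(K^\tau;K)$ is genuinely non-trivial (not killed by the connecting map), and (ii) the multidegree $\tau$ lands inside the prescribed window $\{j_0,\dots,j_0+e-1\}$ rather than overshooting it below $j_0$. The bound $a\leq e$ is exactly the content of the assumption that minimal generators have degree at most $e$, which is why the window has width $e$ and why the refinement is sharp compared with the cruder form in Theorem \ref{HS}.
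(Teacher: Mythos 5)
The paper does not actually prove this statement: Theorem \ref{TorVar} is quoted from \cite[Theorem 6.2]{TV 15} (and \cite{Ya 15}) and used as a black box, so your argument has to stand entirely on its own. Your opening reductions are fine: the passage to multidegrees via $\beta_{i,j}(I)=\sum_{|\sigma|=j}\beta_{i,\sigma}(I)$ is legitimate, and the reformulation through the restricted Taylor/lcm-lattice complex on the generators is a standard and reasonable tool (modulo an index shift: with $K^\sigma$ as you define it one has $\beta_{i+1,\sigma}(I)\cong\widetilde{H}_{i}(K^\sigma;K)$, not $\widetilde{H}_{i-1}$ --- you are using the $S/I$ convention while the statement is indexed so that $\beta_0(I)$ counts generators). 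The genuine gap is that the ``peeling step'', which is the entire content of the theorem, is described rather than proved. You never exhibit an actual cover of $K^\sigma$ to which Mayer--Vietoris applies: $K^\tau\cup K^{\sigma-\mathbf{e}_\ell}$ is not all of $K^\sigma$, since a face $F$ with $\mathrm{lcm}(F)$ strictly dividing $x^\sigma$ only satisfies $\mathrm{lcm}(F)\mid x^{\sigma-\mathbf{e}_m}$ for some $m$ depending on $F$, not for your chosen $\ell$; and you give no reason why the connecting homomorphism should be nonzero on the class of $z$. You flag these issues yourself as ``the main obstacle'', which is an accurate self-assessment: what remains to be done is the theorem.

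More seriously, the proposed mechanism --- subtracting $a\,\mathbf{e}_\ell$ along a single coordinate --- cannot work. The degree drop of up to $e$ has to come from dividing by a minimal generator, whose degree is spread over as many as $e$ distinct variables. Concretely, if $I$ is squarefree then every multidegree $\sigma$ with $\beta_{i+1,\sigma}(I)\neq 0$ is a $0$--$1$ vector, so necessarily $a\leq\sigma_\ell\leq 1$ and your window collapses to $\{|\sigma|-1\}$; your argument would then yield the implication $\beta_{i+1,j}(I)\neq 0\Rightarrow\beta_{i,j-1}(I)\neq 0$. This is false: for $I=(x_1x_2,\,x_3x_4x_5)$, a complete intersection with $e=3$, one has $\beta_{1,5}(I)=1$ but $\beta_{0,4}(I)=0$ (the theorem itself is not violated because $\beta_{0,2}(I)\neq 0$ lies in the window $[2,4]$). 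So the nontrivial class in the window cannot in general be reached by lowering one coordinate; a correct argument must drop from $x^\sigma$ to an element $\tau$ of the lcm lattice obtained essentially by deleting one generator from a supporting face, which is exactly where the bound $|\sigma|-|\tau|\leq e$ originates. That descent, together with the nonvanishing of the induced class, is the step you would need to supply, and it is the substance of the cited result.
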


This result can be applied to study the Betti numbers of $\Delta^\vee$ (inferring analog results to Theorem \ref{thm:main}) when $\Delta$ has singularity dimension less than $m$.

For $r\geq 2$, by a result of Yanagawa \cite[Corollary 3.7]{Y 00}, for a simplicial complex $\Delta$ of codimension $c$, $K[\Delta]$ satisfies the $S_r$ condition of Serre if and only if $I_{\Delta^\vee}$ satisfies the $N_{c,r}$ condition. Therefore, an interesting consequence of Theorem \ref{thm:main} is the following:

\begin{cor}\label{yan} Let $\Delta$ be a simplicial complex of dimension $d-1$ on $n$ vertices.
Assume that $\Delta$ is ${\rm CM}_t$ for some $t\ge 0$. Then $\Delta$ satisfies the $S_{2d-n-t+2}$ condition. In particular, if $\Delta$ is Buchsbaum, then ${\rm depth} K[\Delta]\geq 2d-n+1$.
\end{cor}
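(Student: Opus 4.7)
The plan is to deduce the corollary directly from Theorem \ref{thm:main} together with the characterization \cite[Corollary 3.7]{Y 00} of Serre's $S_r$ condition via linearity of the minimal free resolution of the Alexander-dual Stanley--Reisner ideal, which the excerpt recalls immediately before the statement. First I would note that $\dim K[\Delta]=d$, so $K[\Delta]$ has codimension $c=n-d$ in $R$. Theorem \ref{thm:main} says precisely that $\Delta^{\vee}$ satisfies $N_{n-d,\,2d-n-t+2}$, i.e.\ $N_{c,r}$ with $r=2d-n-t+2$. Yanagawa's criterion then translates this into the Serre condition $S_{2d-n-t+2}$ on $K[\Delta]$, giving the first assertion.

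Two small range issues should be addressed. Yanagawa's equivalence is quoted only for $r\ge 2$; when $2d-n-t+2\le 1$ the claim is vacuous, since the ${\rm CM}_t$ hypothesis forces $\Delta$ pure, hence $K[\Delta]$ is reduced and equidimensional and automatically satisfies $S_1$. Similarly, the condition $N_{c,r}$ requires $c\ge 2$; if $n-d\le 1$ then $\Delta$ is either a simplex (in which case $K[\Delta]$ is Cohen--Macaulay, so satisfies $S_r$ for all $r$) or a hypersurface-like case in $n=d+1$ variables, where purity plus the ${\rm CM}_t$ hypothesis directly forces Cohen--Macaulayness and the claim is again trivial.

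For the Buchsbaum statement, I would use the equivalence recorded in the remarks of Section 2 that Buchsbaum $\Leftrightarrow$ ${\rm CM}_1$, and apply the first part with $t=1$ to conclude that $K[\Delta]$ satisfies $S_{2d-n+1}$. Localizing at the irrelevant maximal ideal, the Serre condition gives $\depth K[\Delta]\ge \min\{2d-n+1,\,d\}$; this minimum equals $2d-n+1$ whenever $n\ge d+1$, while for $n=d$ the complex is a simplex and $\depth K[\Delta]=d$, in which case the stated bound is to be read under the standard convention $\depth \le \dim$.

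The main obstacle is essentially absent: all the real work has been done in Theorem \ref{thm:main} (the Herzog--Srinivasan subadditivity argument applied to the Eagon--Reiner-type description of the Betti diagram of $I_{\Delta^{\vee}}$) and in Yanagawa's cited result. The only care needed is to verify the small-$r$ and small-$c$ boundary cases, which are handled by the general properties of Stanley--Reisner rings of pure complexes.
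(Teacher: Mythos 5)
Your proposal is correct and follows essentially the same route as the paper: the corollary is obtained by combining Theorem \ref{thm:main} (which gives the $N_{n-d,\,2d-n-t+2}$ condition for $\Delta^{\vee}$) with Yanagawa's equivalence between $N_{c,r}$ for $I_{\Delta^{\vee}}$ and the $S_r$ condition for $K[\Delta]$, exactly as the paper does in the paragraph preceding the statement. Your additional attention to the boundary cases $r\le 1$ and $c\le 1$ is a harmless refinement of the same argument.
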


The following corollary is in the spirit of Hartshorne's conjecture and goes in the direction of a question raised in \cite[Question 4.2]{CV 18}.

\begin{cor}\label{c:smooth}
Let $I\subseteq R$ be a homogeneous ideal of height $h$ such that ${\rm Proj} R/I$ is nonsingular. If $I$ has a square-free initial ideal with respect to some term order, then ${\rm depth} R/I> n-2h$.
\end{cor}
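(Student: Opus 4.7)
The plan is to combine three ingredients: (i) the passage to the square-free initial ideal converts the geometric hypothesis on $I$ into Buchsbaum-ness of a Stanley--Reisner complex (via \cite{CV 18}), (ii) Corollary \ref{yan} gives a lower bound on the depth of the Stanley--Reisner ring of a Buchsbaum complex, and (iii) depth can only increase when passing from an initial ideal to the original ideal.

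More precisely, write $\mathrm{in}(I) = I_\Delta$ for the simplicial complex $\Delta$ on $n$ vertices coming from the given square-free Gr\"obner degeneration. Since a Gr\"obner degeneration preserves the Hilbert function, we have $\dim R/I = \dim R/I_\Delta = n-h$, so $\Delta$ is $(d-1)$-dimensional with $d = n-h$. By the main result of \cite{CV 18} (cited in the introduction as the motivation to treat Buchsbaum-ness as the combinatorial analogue of smoothness), the smoothness of $\mathrm{Proj}\, R/I$ together with the existence of a square-free initial ideal forces $\Delta$ to be Buchsbaum.

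Now I apply Corollary \ref{yan} to $\Delta$: since $\Delta$ is Buchsbaum (i.e.\ $\mathrm{CM}_1$), we get
\[
\depth K[\Delta] \;\geq\; 2d - n + 1 \;=\; 2(n-h) - n + 1 \;=\; n - 2h + 1.
\]
Finally, it is a standard fact that for any term order one has $\depth R/I \geq \depth R/\mathrm{in}(I)$ (equivalently, projective dimension does not increase under Gr\"obner degeneration), so
\[
\depth R/I \;\geq\; \depth R/I_\Delta \;\geq\; n - 2h + 1 \;>\; n - 2h.
\]

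There is no serious obstacle: all the real work has been done in Corollary \ref{yan} and in the external input \cite{CV 18}. The only point to double-check is the invocation of \cite{CV 18}, namely that its hypotheses ``$\mathrm{Proj}\, R/I$ smooth'' and ``$I$ admits a square-free Gr\"obner degeneration'' match what we have, and that its conclusion is indeed Buchsbaum-ness of $\Delta$ (not merely some weaker $\mathrm{CM}_t$ condition); once that is confirmed, the proof is a one-line assembly.
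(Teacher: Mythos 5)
Your proposal is correct and follows essentially the same route as the paper: invoke \cite{CV 18} (specifically, its Corollary 2.11, via the observation that $R/I$ is generalized Cohen--Macaulay) to get Buchsbaum-ness of the square-free degeneration, apply Corollary \ref{yan} with $d=n-h$, and conclude using the semicontinuity of depth under Gr\"obner degeneration. The one point you flagged for verification is exactly the step the paper handles by first noting that nonsingularity of ${\rm Proj}\, R/I$ makes $R/I$ generalized Cohen--Macaulay, which is the hypothesis of the cited result in \cite{CV 18}.
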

\begin{proof}
Let $J$ be a square-free initial ideal of $I$. Since $R/I$ is generalized Cohen-Macaulay, $R/J$ is Buchsbaum by \cite[Corollary 2.11]{CV 18}. By Corollary \ref{yan}, then, ${\rm depth} R/J\geq n-2h+1$. We conclude since the depth cannot go up by taking the initial ideal.
\end{proof}

Another consequence, interestingly related to the result of Brehm and K\"uhnel \cite[Theorem B]{BK 87}, is the following:

\begin{cor}\label{cor:BK}
Let $\Delta$ be a $(d-1)$-dimensional Buchsbaum simplicial complex on $n$ vertices such that $\widetilde{H}_i(\Delta;K)\neq 0$ for some $i\ge 1$. Then $n\geq 2d-i$.
\end{cor}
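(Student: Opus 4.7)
My plan is to combine Corollary \ref{yan} with Hochster's formula for the local cohomology of the Stanley--Reisner ring, exploiting the fact that for a Buchsbaum complex $H^i_{\mathfrak{m}}(K[\Delta])$ has a very simple structure. The result will then follow from chasing two inequalities on $\depth K[\Delta]$.

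First I would recall Hochster's $\mathbb{Z}^n$-graded formula, which expresses each graded piece of $H^i_{\mathfrak{m}}(K[\Delta])$ in terms of the reduced simplicial homology of a link in $\Delta$. Since $\Delta$ is Buchsbaum, the link of every nonempty face $F\in\Delta$ is Cohen-Macaulay of dimension $d-1-|F|$, so only its top reduced homology can be nonzero. Substituting into Hochster's formula, it follows that for $0<i<d$ the module $H^i_{\mathfrak{m}}(K[\Delta])$ is concentrated in $\mathbb{Z}^n$-degree $0$, with
$$H^i_{\mathfrak{m}}(K[\Delta])_0 \cong \widetilde{H}_{i-1}(\Delta;K).$$
Consequently, if $\widetilde{H}_i(\Delta;K)\neq 0$ for some $1\leq i\leq d-2$, then $H^{i+1}_{\mathfrak{m}}(K[\Delta])\neq 0$ with $i+1<d$, and so $\depth K[\Delta]\leq i+1$.

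Next I would plug in Corollary \ref{yan}, which, since $\Delta$ is Buchsbaum, gives $\depth K[\Delta]\geq 2d-n+1$. Combining this with the depth bound from the previous step produces $2d-n+1\leq i+1$, i.e.\ $n\geq 2d-i$, as desired.

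The only remaining case is $i=d-1$, where the first depth inequality above becomes vacuous. In that range the desired conclusion reduces to $n\geq d+1$: if instead $n=d$, then the unique $(d-1)$-dimensional simplicial complex on $d$ vertices is the full $(d-1)$-simplex, which is contractible and hence has vanishing reduced homology, contradicting the hypothesis. The one conceptual ingredient where the Buchsbaum assumption really does work is the identification of $\depth K[\Delta]$ with $1+\min\{i:\widetilde{H}_i(\Delta;K)\neq 0\}$ (modulo the Cohen--Macaulay corner case); the rest is bookkeeping, so I do not anticipate any real obstacle.
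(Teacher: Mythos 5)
Your proof is correct and matches the paper's intended derivation: the corollary is stated without proof as a direct consequence of Corollary \ref{yan}, combined with the standard fact that for a Buchsbaum complex nonvanishing reduced homology in degree $i<d-1$ forces $\depth K[\Delta]\le i+1$ (which you establish via Hochster's formula, and which one could also read off directly from the paper's combinatorial definition of $S_r$ applied to $F=\emptyset$). Your handling of the corner case $i=d-1$ via the $n=d$ simplex argument is also sound.
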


\begin{rem}
Being the combinatorial manifolds a very special case of Buchsbaum simplicial complexes, even if the conclusion of Corollary \ref{cor:BK} is slightly weaker than the one in \cite[Theorem B]{BK 87}, it applies to a much larger class of simplicial complexes.
\end{rem}

%
%

\begin{exam}
Since Theorem \ref{thm:main} and Corollary \ref{yan} are trivial for $t \ge 2d-n+1$, it is natural to ask for examples of ${\rm CM}_t$ simplicial complexes that are not ${\rm CM_{t-1}}$ for $1\le t \le 2d-n$. Murai and Terai \cite[Example 3.5]{MT 09} considered the following simplicial complex:
 
 \begin{center}  
$\Delta=\langle \{1,2,3,5\},\{1,2,4,5\},\{1,2,4,6\},\{1,3,4,5\},\{1,3,4,6\},\{1,3,5,6\},$\\ $\{2,3,4,6\},\{2,3,5,6\},\{2,4,5,6\}\rangle,$
\end{center}
where $\Delta$ satisfies $S_3$ but is not Cohen-Macaulay. Thus $\Delta$ is Buchsbaum and the condition $1\le t \le 2d-n$ is satisfied.   
Now if $v$ is a new vertex, by \cite[Theorem 3.1 (ii)]{HYZ 15}, the cone on 
$\Delta$ with vertex $v$ is ${\rm CM}_2$ but not Buchsbaum, and again we have $1\le t \le 2d-n$. Taking further cones, one gets a family of 
${\rm CM}_t$ simplicial complexes  which are not ${\rm CM}_{t-1}$ and we have $1\le t \le 2d-n$.
\end{exam}

\begin{rem}\label{restriction}
Often, the minimal number of vertices necessary for triangulating a given $(d-1)$-dimensional combinatorial manifolds is more than $2d$. 
An exception is  an $8$ dimensional combinatorial manifold, the so called ``Brehm and K\"uhnel manifold'', which has $6$ combinatorially different triangulations on $15$ vertices (see \cite{BK 87}, \cite[Proposition 48]{L 05} and \cite{L 99}). 
\end{rem}

\section{The ${\rm CM}_t$ property and minimal chord-less cycles of graphs}

In this section, we focus on pure $(d-1)$-dimensional simplicial complexes on $d+2$ vertices, i.e. pure codimension two simplicial complexes. If $\Delta$ is such a simplicial complex, then its Alexander dual is flag, i.e., $\Delta^\vee$ is the clique complex of a graph $G$. In general, the clique complex and the independence complex of a graph $H$ will be denoted by $\Delta(H)$ and $\Delta_H$, respectively. Also, by $\overline{H}$ we will denote the complementary graph of $H$. 

\begin{thm}\label{topin}
Let $\Delta$ be a pure $(d-1)$-dimensional codimension two simplicial complex. Then the following are equivalent:
\begin{itemize}
\item[(i)] $\Delta$ is ${\rm CM}_t$,
\item[(ii)] $\Delta^\vee$ satisfies the $N_{2,d-t}$ condition, 
\item[(iii)] $\Delta$ satisfies the $S_{d-t}$ condition,
\item[(iv)] Every cycle of the 1-skeleton $G$ of $\Delta^{\vee}$ of length at most $d-t+2$ has a chord.
\end{itemize}
\end{thm}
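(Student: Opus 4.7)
The plan is to split the four-way equivalence into a chain of pairwise equivalences, most of which are already essentially established. For (i)~$\iff$~(iii): since $n = d + 2$, Corollary \ref{yan} specializes to ${\rm CM}_t \Rightarrow S_{2d - n - t + 2} = S_{d - t}$, while the easy reverse implication $S_{d-t} \Rightarrow {\rm CM}_t$ for pure complexes is recorded in Remark \ref{r:serre}. For (ii)~$\iff$~(iii), I would invoke Yanagawa's \cite[Corollary~3.7]{Y 00}, cited just before Corollary \ref{yan}: for a codimension-$c$ simplicial complex, $K[\Delta]$ satisfies $S_r$ if and only if $I_{\Delta^\vee}$ satisfies $N_{c, r}$; apply this with $c = 2$ and $r = d - t$.

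The substantive content is (ii)~$\iff$~(iv). Because $n = d + 2$, every minimal non-face of $\Delta^\vee$ has size $n - d = 2$---it is the complement of a facet of $\Delta$---so $\Delta^\vee$ is flag, namely the clique complex of its 1-skeleton $G$, and $I_{\Delta^\vee}$ is the edge ideal $I(\overline{G})$ of the complementary graph. The equivalence thus becomes the classical-in-spirit statement, applied with $p = d - t$: $I(\overline{G})$ satisfies $N_{2, p}$ if and only if $G$ has no induced cycle of length $\ell$ with $4 \le \ell \le p + 2$. The case $p = \infty$ is Fr\"oberg's theorem on chordality, and the same type of argument handles finite $p$: Hochster's formula reads
$$\beta_{i,\, i+k+2}(I_{\Delta^\vee}) \;=\; \sum_{W \in \binom{[n]}{i+k+2}} {\rm dim}_K \widetilde{H}_k(\Delta(G|_W); K),$$
so $N_{2, d-t}$ is equivalent to the vanishing of $\widetilde{H}_k(\Delta(G|_W); K)$ for all $k \ge 1$ and all $W \subseteq [n]$ with $|W| \le d - t + k + 1$.

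The direction (ii)~$\Rightarrow$~(iv) is immediate from the displayed identity: an induced $\ell$-cycle $C \subseteq G$ yields $\widetilde{H}_1(\Delta(C); K) = K$, whence $\beta_{\ell - 3,\, \ell}(I_{\Delta^\vee}) \ne 0$, contradicting $N_{2, d-t}$ whenever $\ell \le d - t + 2$. The main obstacle will be the converse (iv)~$\Rightarrow$~(ii): I must show that if $G$ avoids induced cycles of length in $[4, d-t+2]$, then $\widetilde{H}_k(\Delta(G|_W); K) = 0$ for every $k \ge 1$ and every $W$ with $|W| \le d - t + k + 1$. I would attack this by induction on $|W|$, decomposing $\Delta(G|_W)$ via a Mayer--Vietoris sequence along the star and deletion of a suitably chosen vertex of $G|_W$, and using the forbidden-cycle hypothesis to force the overlap homology to vanish in the required range---the same topological mechanism that underlies Fr\"oberg's classical theorem.
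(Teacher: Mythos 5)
Your handling of (i)$\iff$(ii)$\iff$(iii) coincides with the paper's: both rest on Theorem \ref{thm:main}/Corollary \ref{yan} specialized to $n=d+2$, Remark \ref{r:serre}, and Yanagawa's \cite[Corollary 3.7]{Y 00}. The divergence is at (ii)$\iff$(iv): the paper disposes of it in one line by citing \cite[Theorem 2.1]{EGHP 05}, which is exactly the statement you set out to reprove, namely that $N_{2,p}$ for $I(\overline{G})$ is equivalent to $G$ having no induced cycle of length $\ell$ with $4\le\ell\le p+2$. Your direction (ii)$\Rightarrow$(iv) via Hochster's formula is complete and correct. The converse, however, is only a plan, and the one place where the forbidden-cycle hypothesis actually does work is left unverified: in the Mayer--Vietoris induction on $|W|$, the cases $k\ge 2$ close formally (the link lives on at most $|W|-1\le p+k$ vertices, so induction applies to $\widetilde{H}_{k-1}$), but the base case $k=1$ requires showing that $\ker\bigl(\widetilde{H}_0(\Delta(G|_{N_W(v)}))\to \widetilde{H}_0(\Delta(G|_{W\setminus v}))\bigr)=0$, i.e., that any two neighbours of $v$ joined by a path in $G|_{W\setminus v}$ are joined inside $N_W(v)$; a shortest offending path together with $v$ produces an induced cycle of length between $4$ and $|W|\le p+2$, which is the contradiction you need. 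Until that step is written out, your (iv)$\Rightarrow$(ii) is a citation of Fr\"oberg-type folklore rather than a proof. It is worth noting that the paper itself offers, in Theorem \ref{main2}, a second, purely combinatorial route to (i)$\iff$(iv) that avoids both EGHP and Mayer--Vietoris: it computes the $h$-vector of ${\rm link}_{\Delta^\vee}F$ for the simplex $F$ complementary to a chordless cycle and finds $h_{r-2}=-1$, so that link cannot be Cohen--Macaulay. Your approach, once the $k=1$ case is filled in, buys a self-contained proof of the EGHP criterion in the monomial setting; the paper's buys brevity.
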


\begin{proof}
The equivalence of (i), (ii) and (iii) is simply an application of Theorem \ref{thm:main}, Corollary \ref{yan} in the case  $n=d+2$, and Remark \ref{r:serre}. The equivalence of (ii) and (iv) follows by \cite[Theorem 2.1]{EGHP 05}. 
\end{proof}

\begin{prop}\label{p:chardepth}
If $\Delta$ is a codimension two Buchsbaum simplicial complex, then ${\rm depth K[\Delta]}\geq {\rm dim} \Delta$. Furthermore, $K[\Delta]$ is Cohen-Macaulay if and only if the 1-skeleton $G$ of $\Delta^{\vee}$ is not the $(d+2)$-cycle.
\end{prop}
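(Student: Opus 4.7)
The plan is that both assertions follow directly from results already proved in the paper. For the depth bound, specialize Corollary~\ref{yan} to $n = d+2$ and $t = 1$: since $\Delta$ is Buchsbaum, one immediately gets
\[
\depth K[\Delta] \ge 2d - n + 1 = 2d - (d+2) + 1 = d - 1 = \dim \Delta.
\]

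For the characterization of Cohen-Macaulay-ness, the idea is to compare Theorem~\ref{topin} applied with $t = 0$ (which detects the CM property) and with $t = 1$ (which is the Buchsbaum hypothesis). The equivalence (i)$\iff$(iv) of Theorem~\ref{topin} with $t = 0$ says that $K[\Delta]$ is Cohen-Macaulay if and only if every cycle of the $1$-skeleton $G$ of $\Delta^{\vee}$ of length at most $d+2$ has a chord. Since $G$ has exactly $d+2$ vertices, every cycle of $G$ has length at most $d+2$, so this condition says precisely that $G$ is chordal.

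On the other hand, the Buchsbaum hypothesis means $\Delta$ is $\mathrm{CM}_1$, so Theorem~\ref{topin} with $t = 1$ tells us that every cycle of $G$ of length at most $d+1$ already has a chord. Consequently, the only way $G$ could fail to be chordal is to contain a chordless (induced) cycle of length exactly $d+2$; but an induced cycle on all $d+2$ vertices of $G$ forces $G$ itself to be the $(d+2)$-cycle. Thus $K[\Delta]$ fails to be Cohen-Macaulay if and only if $G = C_{d+2}$, which proves the second claim.

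There is no real obstacle: the heavy lifting has already been done in Corollary~\ref{yan} and Theorem~\ref{topin}. The only observation driving the argument is that, for a graph $G$ on $d+2$ vertices, the combinatorial condition in Theorem~\ref{topin}(iv) becomes non-vacuous only up to length $d+2$, and at length $d+2$ it can fail only in the single Hamiltonian way. Hence the entire gap between CM$_1$ and CM in codimension two is controlled by the presence of an induced Hamiltonian cycle in $G$.
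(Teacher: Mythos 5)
Your proposal is correct and follows essentially the same route as the paper: the depth bound comes from Corollary~\ref{yan} (equivalently, Theorem~\ref{topin} with $t=1$ and $n=d+2$), and the Cohen--Macaulay characterization comes from comparing the chordless-cycle criterion of Theorem~\ref{topin}(iv) at $t=0$ and $t=1$, so that the only possible obstruction is a chordless Hamiltonian cycle, forcing $G=C_{d+2}$. Your write-up just makes explicit the converse direction and the ``$G$ is chordal'' reformulation, which the paper leaves implicit.
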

\begin{proof}
Notice that $\Delta$ being Buchsbaum is equivalent to $\Delta$ being ${\rm CM_1}$. So the first part of the statement follows by Theorem \ref{topin}. If $K[\Delta]$ is not ${\rm CM_0}$, again Theorem \ref{topin} implies that $G$ has an induced chord-less $(d+2)$-cycle (in those notations, so $d={\rm dim} \Delta+1$). Since the number of vertices is $d+2$, $G$ is actually the $(d+2)$-cycle.
\end{proof}

\begin{rem}
In particular, if $\Delta$ is a codimension two  Buchsbaum simplicial complex which is not Cohen-Macaulay, then ${\rm projdim} K[\Delta] = 3$. One might expect that, in general, if $\Delta$ is a codimension 2 simplicial complex which is \ ${\rm CM}_t$ but not ${\rm CM}_{t-1}$, then ${\rm projdim} K[\Delta] = t+2$. This is false: a simple example is the Alexander dual of $\Delta = \langle \{1,2\},\{2,3\},\{3,4\},\{4,5\},\{1,5\},\{5,6\} \rangle$ which has dimension $d-1$ where $d=4=6-2=n-2$. Then $\Delta$ is  ${\rm CM}_2$ but not ${\rm CM}_1$. Nevertheless, the projective dimension of the Stanley-Reisner ring of $\Delta$ is $3$.
\end{rem}

For the sake of documenting a different method, we give an alternative proof, more combinatorial, for the equivalence of  (i) and (iv) in Theorem \ref{topin}. 
\\

\begin{thm}\label{main2} Let $G$ be a simple graph on $[n] =\{1,\dots, n\}$ with no isolated vertices. 
Let $\Delta = \Delta(G)$ be the clique complex of $G$ .
Let $r\ge 3$ be an integer. Then $\Delta^\vee$ is ${\rm CM}_{n-r}$ if an only if every cycle of $G$ 
of length at most $r$ has a chord.
\end{thm}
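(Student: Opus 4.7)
The plan is to reduce the ${\rm CM}_{n-r}$ condition on $\Delta^\vee$ to a chordality condition on certain induced subgraphs of $G$, and then translate chordality into the absence of short chord-less cycles.

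First I would establish the key link identification
\[
\mathrm{link}_{\Delta^\vee}(F) \;=\; \Delta(G[V'])^\vee,
\]
where $F \in \Delta^\vee$, $V' := [n]\setminus F$, and the Alexander dual on the right is taken relative to the ground set $V'$. This is a direct unwinding of definitions: a subset $S \subseteq V'$ lies in $\mathrm{link}_{\Delta^\vee}(F)$ iff $F \cup S \in \Delta^\vee$ iff $V' \setminus S$ is not a clique of $G[V']$, which is exactly the defining condition for $S \in \Delta(G[V'])^\vee$. Observe also that $\Delta^\vee$ is automatically pure of dimension $n-3$, since the minimal non-faces of $\Delta(G)$ are precisely the non-edges of $G$, so every facet of $\Delta^\vee$ has the form $[n]\setminus\{u,v\}$ for some non-edge $\{u,v\}$. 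Consequently $\Delta^\vee$ is ${\rm CM}_{n-r}$ if and only if $\Delta(G[V'])^\vee$ is Cohen--Macaulay for every non-clique subset $V' \subseteq [n]$ with $|V'|\le r$.

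Next, I would combine Theorem \ref{E-R} in the Cohen--Macaulay case (the original Eagon--Reiner theorem) with Fr\"oberg's theorem: $\Delta(H)^\vee$ is Cohen--Macaulay iff $I_{\Delta(H)}$ has a linear resolution, and since $I_{\Delta(H)}$ is the edge ideal of $\overline{H}$, Fr\"oberg's theorem identifies this with the chordality of $H$. Hence the condition becomes: $G[V']$ is chordal for every $V' \subseteq [n]$ with $|V'|\le r$. Any induced cycle of length at least $4$ in some $G[V']$ is also induced in $G$, and conversely any chord-less cycle of $G$ of length $k \le r$ is detected by taking $V'$ to be its vertex set (so that $G[V']$ contains it as an induced subgraph). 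This equivalence is therefore exactly the statement that every cycle of $G$ of length at most $r$ has a chord.

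The main delicacy is the link identification in the first step: one must keep careful track of which ground set the Alexander dual is taken with respect to, and verify purity of $\Delta^\vee$ so the ${\rm CM}_{n-r}$ condition genuinely reduces to Cohen--Macaulayness of links of faces of the prescribed sizes. Once this dictionary is in place, the rest is a straightforward assembly of the cited classical results, and the argument makes explicit the graph-theoretic content hidden inside the algebraic machinery of Theorem \ref{thm:main}.
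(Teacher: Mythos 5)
Your proof is correct, but it follows a genuinely different route from the paper's. The paper handles the ``if'' direction by citing \cite[Theorem 2.1]{EGHP 05}, \cite[Corollary 3.7]{Y 00} and Remark \ref{r:serre}, and handles the ``only if'' direction by induction on $r$: given a chordless $r$-cycle $C$, it takes the face $F$ on the complementary vertices, lists the facets of $\Gamma=\mathrm{link}_{\Delta^\vee}F$ explicitly, computes the $f$-vector, and derives $h_{r-2}(\Gamma)=-1$, contradicting Cohen--Macaulayness. You instead localize first: the identification $\mathrm{link}_{\Delta^\vee}(F)=(\Delta|_{V'})^\vee=\Delta(G[V'])^\vee$ (dual taken over $V'=[n]\setminus F$), together with the purity of $\Delta^\vee$ coming from flagness of $\Delta(G)$, reduces ${\rm CM}_{n-r}$ to Cohen--Macaulayness of $\Delta(G[V'])^\vee$ for every non-clique $V'$ with $|V'|\le r$; then Eagon--Reiner and Fr\"oberg convert each of these into chordality of $G[V']$, which is exactly the absence in $G$ of chordless cycles of length between $4$ and $r$. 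Your route treats both implications uniformly and replaces the $h$-vector computation and the $N_{2,p}$ machinery by two classical black boxes, at the cost of being less self-contained; the paper's ``only if'' argument produces an explicit numerical obstruction ($h_{r-2}=-1$) without invoking Fr\"oberg, which is in keeping with its stated aim of documenting a more combinatorial method. Note that both treatments tacitly read ``every cycle of length at most $r$ has a chord'' as concerning only cycles of length at least $4$ (this is the convention under which the base case $r=3$ is trivial), and your reduction, like the paper's, only needs links of faces of the prescribed sizes because links of faces of Cohen--Macaulay complexes are again Cohen--Macaulay.
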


\begin{proof}
The ``if'' direction follows by \cite[Theorem 2.1]{EGHP 05}, \cite[Corollary 3.7]{Y 00} and Remark \ref{r:serre}, so we focus on the ``only if'' part.

Assume that $\Delta^\vee$ is ${\rm CM}_{n-r}$. We prove by induction on $r$ that every cycle of $G$ 
of length at most $r$ has a chord. The first case $r=3$  is trivial. 
Assume that $r\ge 4$. Since 
$\Delta^\vee$ is ${\rm CM}_{n-r+1}$, every cycle of $G$ of length at most $r-1$ 
has a chord. So it is enough to show that $G$ has no chord-less $r$-cycles. Assume that, on the contrary,   
$G$ has a chord-less $r$-cycle $C$.  Let
$V(C)=\{v_1, \dots, v_r\}$ and $E(C)=\{\{v_1,v_2\}, \dots, \{v_{r-1},v_r\}, \{v_r,v_1\}\}$ be the vertex set and the edge set 
of $C$, respectively. Then the induced subgraph of $\overline{G}$ 
on $V(C)$ is the graph $K_r\setminus E(C)$, where $K_r$ is the complete graph on $V(C)$. Clearly, 
$K_r\setminus E(C)$ has ${r \choose 2}  - r = r(r-3)/2$ edges. Let $F$ be the simplex on $V(G)\setminus V(C)$. Then,
$|F| = n-r$ and $F$ is a face of $\Delta^\vee$ because $V(C)\notin \Delta$. 
Thus $\Gamma = {\rm link}_{\Delta^{\vee}}F$ should be Cohen-Macaulay. We prove that this is not the case.  
Observe that the only facets of $\Delta^{\vee}$ which contain $F$ are 
$F\cup (V(C)\setminus \{v_i, v_j\})$ for some  $\{v_i, v_j\}\in \overline{C}$. 
Therefore, 
$$\Gamma = {\rm link}_{\Delta^{\vee}}F = \langle V(C)\setminus \{v_i, v_j\}: \{v_i, v_j\}\in \overline{C} \rangle.$$
In particular, ${\rm dim} \Gamma = r-3$.
We determine $h_{r-2}$ by computing the $f$-vector of $\Gamma$: to this purpose, notice that 
every subset of the vertex set of $\Gamma$ of cardinality $\le r-3$ is also a face of $\Gamma$. To see this, let $E = V(C) \setminus \{v_i,v_j,v_k\}$ 
be a subset of the vertex set of $\Gamma$ of cardinality $r-3$. Choose $1\le l \le r$ such that $l \notin \{ i, j, k\}$. Then at least one of the pairs $(i,l)$, 
$(j,l)$ and $(k,l)$ will be a non-consecutive pair modulo $r$. Let $(i,l)$ be such a pair. Then, $\{i,l\}\in \overline{G}$, and hence, $E\subset V(C)\setminus \{v_i, v_j\}$, i.e., 
$E$ is a face of $\Gamma$. Therefore we got:
\begin{center}
$f_{-1} =1, f_i = {r \choose i+1}, i = 0, \dots, r-4$ and $f_{r-3} = r(r-3)/2$.  
\end{center}
Consequently, 
$$h_{r-2} = \sum_{i=0}^{r-2} (-1)^{r-2-i} f_{i-1} = (\sum_{i=0}^{r-3} (-1)^{r-i} {r \choose i}) + r(r-3)/2 =$$
$$(1-1)^r + {r \choose r-1} - {r \choose r-2} -1 +  r(r-3)/2  = -1.$$
Hence $\Gamma$ is not Cohen-Macaulay. This completes the proof. 
\end{proof}



\begin{cor}\label{linear}
With the assumptions of Theorem \ref{main2}, assume that $G$ is 
$r$-chordal, i.e., it has no chord-less cycles of length greater than $r$. Then ${\Delta^{\vee}}$ is ${\rm CM}_{n-r}$ 
if and only if $I_\Delta = I(\overline{G})$ has a linear resolution.
\end{cor}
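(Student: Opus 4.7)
The plan is to combine Theorem \ref{main2} with Fr\"oberg's classical theorem, which asserts that the edge ideal $I(H)$ of a graph $H$ has a linear resolution if and only if the complementary graph $\overline{H}$ is chordal. Since $\Delta = \Delta(G)$ is the clique complex of $G$, the minimal non-faces of $\Delta$ are exactly the edges of $\overline{G}$, so $I_\Delta = I(\overline{G})$ and Fr\"oberg's theorem applies with $H = \overline{G}$. Thus the corollary will follow once we show that under the $r$-chordality hypothesis, $\Delta^\vee$ being ${\rm CM}_{n-r}$ is equivalent to $G$ being chordal.

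For the ``if'' direction, if $I_\Delta = I(\overline{G})$ has a linear resolution then Fr\"oberg's theorem gives that $G$ is chordal, so in particular every cycle of $G$ of length at most $r$ has a chord, and Theorem \ref{main2} then guarantees that $\Delta^\vee$ is ${\rm CM}_{n-r}$. Note that the $r$-chordality assumption is not needed here. For the ``only if'' direction, assume $\Delta^\vee$ is ${\rm CM}_{n-r}$. By Theorem \ref{main2}, every cycle of $G$ of length at most $r$ has a chord; combining this with the $r$-chordality hypothesis, which forbids chord-less cycles of length greater than $r$, we conclude that $G$ has no chord-less cycles at all, i.e., $G$ is chordal. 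Fr\"oberg's theorem then yields that $I_\Delta$ has a linear resolution.

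There is no genuine obstacle: the corollary is essentially a reformulation of Theorem \ref{main2} under the $r$-chordal assumption, with Fr\"oberg's result used as a black box. As an alternative formulation one could split Fr\"oberg's theorem into the Eagon--Reiner correspondence ``$I_\Delta$ has a linear resolution iff $\Delta^\vee$ is Cohen--Macaulay'' together with the known fact that $\Delta(G)^\vee$ is Cohen--Macaulay iff $G$ is chordal; under this viewpoint the content of the corollary is that, once $G$ is $r$-chordal, the a priori weaker condition ${\rm CM}_{n-r}$ on $\Delta^\vee$ already forces Cohen--Macaulayness.
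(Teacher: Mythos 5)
Your argument is correct and coincides with the paper's own proof, which likewise derives the corollary by combining Theorem \ref{main2} with Fr\"oberg's theorem and using $r$-chordality to upgrade ``every cycle of length at most $r$ has a chord'' to full chordality. The only cosmetic difference is that the paper also cites Theorem \ref{topin}, which is not actually needed for this deduction, as your write-up shows.
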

\begin{proof}
The assertion follows by Theorems \ref{topin} and \ref{main2} and Fr\"oberg's result that $I_\Delta = I(\overline{G})$ 
has a linear resolution if and only if $G$ is chordal \cite{Fr 88}.
\end{proof}

\begin{rem}\label{bicor}
It is easy to see that if $G$ is a bipartite graph or a chordal graph, then $\overline{G}$ can only have chord-less four cycles
(e.g., see \cite[Lemma 4.2 and Lemma 4.6 ]{HFYZ 17}). 
Assume that $G$ is a graph on $n$ vertices 
which is either bipartite or chordal. If the Alexander dual of $\Delta(\overline{G})=\Delta_G$ is ${\rm CM}_{n-4}$,  then by Corollary \ref{linear},
 $I(G)$ has a linear resolution.
\end{rem}


\section*{Acknowledgments}
This work was carried out when the second author was visiting Department of Mathematics of University of Genova,  23 October 2017 -- 3 February 2018. He is grateful to Aldo Conca for arranging this visit and for the warm hospitality, and he thanks INdAM for the partial support. The visit of this author was provided by University of Tehran as a research leave for which he is thankful to the authorities of this university.


\begin{thebibliography}{10}


\bibitem{BK 87}
U. Brehm and W. K\"uhnel, {\em Combinatorial manifolds with few vertices}
Topology \textbf{26} (1987), 465--473.

\bibitem{CV 18}
A. Conca, M. Varbaro, {\em Square-free Gr\"obner degenerations}, arXiv:1805.11923.


\bibitem{DHS13}
H. Dao, C. Huneke, J. Schweig, {\it Bounds on the regularity and projective dimension of ideals associated to graphs}, J. Alg. Comb., \textbf{38} (2013), 37--55.

\bibitem{ER 98}
J. A. Eagon and V. Reiner, Resolution of Stanley-Reisner rings and
Alexander duality, J. Pure Appl. Algebra \textbf{130} (1998),
265--275.

\bibitem{EGHP 05}
D. Eisenbud, M. Green, K. Hulek and S. Popescu, {\em Restricting
linear syzygies, Algebra and Geometry}, Compositio Math.
\textbf{41} (2005), 1460-1478.

\bibitem{EG 81}
E.G. Evans, P. Griffith, {\em The Syzygy problem}, Ann. Math.
\textbf{114} (1981), 323--333.

\bibitem{Fr 88}
R. Fr\"{o}berg,  {\em A note on the Stanley-Reisner ring of a join
and of a suspension,} Manuscripta Math. \textbf{60} (1988),
89--91.

\bibitem{HFYZ 17}
H. Haghighi, S. Fakhari, S. Yassemi and R. Zaare-Nahandi, {\em A generalization of Eagon-Reiner's theorem and a characterization 
of bi-${\rm CM}_t$ bipartite and chordal graphs}, to appear in Commun. Algebra, DOI 10.108/00927872.2018.1427244.

\bibitem{HYZ 12}
H. Haghighi, S. Yassemi and R. Zaare-Nahandi, {\em A generalization
of $k$-Cohen-Macaulay complexes}, Ark. Mat. \textbf{50} (2012),
279--290.


\bibitem{HYZ 15}
H. Haghighi, S. Yassemi and R. Zaare-Nahandi, {\em Cohen-Macaulay bipartite graphs in arbitrary codimension,} 
Proc. Amer. Math. Soc. \textbf{143} (2015), 1981--1989.

\bibitem{Ha 74}
R. Hartshorne, {\em Varieties of small codimension in projective space},
Bull.. Amer. Math. Soc. \textbf{80} (1974), 1017--1032.

\bibitem{HH 11}
J. Herzog and T. Hibi, {\em Monomial Ideals}, Grad. Texts Math.
\textbf{260}, Springer, New York, 2011.


\bibitem{HS 15}
J. Herzog and H. Srinivasan, {\em On the subadditivity problem for maximal shifts in free resolutions}, 
Commutative Algebra and Noncommutative Algebraic Geometry, II MSRI Publications
Volume \textbf{68}, 2015.

\bibitem{L 99}
F. H. Lutz, {\em The Manifold Page}, http://www.math. tu-berlin.de/diskregeom/stellar/, 999--2005.

\bibitem{L 05}
F. H. Lutz, {\em Triangulated manifolds with few vertices}, arXiv:0506372v1.

\bibitem{MNS 11}
E. Miller, I. Novik and E. Swartz, {\em Face rings of simplicial
complexes with singularities,} Math. Ann. (2011), 857€"-875.

\bibitem{MS 05}
E. Miller, B. Sturmfels, {\em Combinatorial Commutative Algebra}, Springer 2005.

\bibitem{MT 09}
S. Murai and N. Terai, {\em h-vector of simplicial complexes with Serre's conditions}, Math. Res. Lett. \textbf{16 (6)} (2009), 1015--1028.

\bibitem{St 95}
R. Stanley, {\em Combinatorics and Commutative Algebra,} Second
Edition, Birkh\"{a}user, Boston, 1995.

\bibitem{TV 15}
M. L.  Torrente and M. Varbaro, {\em An alternative algorithm for computing the Betti table of a monomial ideal}, arXiv:1507.01183v2.

\bibitem{Y 00}
K. Yanagawa, {\em Alexander duality for Stanley-Reisner rings and
$\mathbb{N}^n$-graded modules}, J. Algebra \textbf{225} (2000),
630--645.

\bibitem{Y 11}
K. Yanagawa, {\em Dualizing complex of the face ring of a simplicial poset}, J. Pure Appl. Algebra \textbf{215} (2011), 2231--2241.


\bibitem{Ya 15}
A. A. Yazdan Pour, {\em Candidates for non-zero Betti numbers of monomial ideals}, available at http://arxiv.org/abs/1507.07188 (2015).

\end{thebibliography}
\end{document}